\newtheorem{theorem}{Theorem}[section]
\newtheorem{corollary}[theorem]{Corollary}
\newtheorem{lemma}[theorem]{Lemma}
\theoremstyle{definition}
\newtheorem{remark}[theorem]{Remark}
\theoremstyle{parrafo}
\numberwithin{equation}{theorem}
\begin{document}

\title[]{Concentration of the ratio between the geometric and
arithmetic means}

\author{J. M. Aldaz}
\address{PERMANENT ADDRESS: Departamento de Matem\'aticas y Computaci\'on,
Universidad  de La Rioja, 26004 Logro\~no, La Rioja, Spain.}
\email{jesus.munarrizaldaz@dmc.unirioja.es}

\address{CURRENT ADDRESS: Departamento de Matem\'aticas,
Universidad  Aut\'onoma de Madrid, Cantoblanco 28049, Madrid, Spain.}
\email{jesus.munarriz@uam.es}

\thanks{2000 {\em Mathematical Subject Classification.} 26D15, 60D05}

\thanks{The author was partially supported by Grant MTM2006-13000-C03-03 of the
D.G.I. of Spain}

%\thanks{Supported by DGICYT 1317 DP Spain}

%\subjclass{}

%\keywords{}

%\date{}

%\dedicatory{}

%\commby{}

%%% ----------------------------------------------------------------------
%%% ----------------------------------------------------------------------
\begin{abstract}  We  explore
the concentration properties of the ratio between the 
 geometric mean and the 
arithmetic mean, showing that for certain sequences of weights one does
obtain concentration, around a value that depends on the sequence. 
\end{abstract}

%%% ----------------------------------------------------------------------

\maketitle

%%% ----------------------------------------------------------------------

\markboth{J. M. Aldaz}{Concentration of the geometric mean}

\section
{Introduction.}  This paper is motivated by the article \cite{GluMi}
by
 E. Gluskin and V. Milman, who considered the   variant
$\prod_{i=1}^n |y_i|^{1/n}  
\le 
\sqrt{n^{-1} \sum_{i=1}^n y^2_i}$
 of the AM-GM inequality in the
equal weights case. Roughly speaking, they
showed that the ratio 
$\prod_{i=1}^n |y_i|^{1/n}/
\sqrt{n^{-1} \sum_{i=1}^n y^2_i}$ is bounded below by  $0.394$ asymptotically in $n$
and with high probability, where probability refers to Haar measure on
the euclidean unit sphere $\mathbb{S}_2^{n-1}$. Thus,  the geometric and arithmetic means are comparable
quantities on  ``large sets", provided   $n$ is large.

For the most part, we   focus on the standard 
AM-GM inequality 
$ 
\prod_{i=1}^n |x|_i^{\alpha_{i,n}}  
\le 
 \sum_{i=1}^n \alpha_{i,n} |x|_i,
$
where $\alpha_{i,n} > 0$ and $\sum_{i=1}^n \alpha_{i,n} = 1$.
We shall see that in some cases the concentration
of measure phenomenon does take place:
For certain sequences of weights, which include the equal
weights case, the GM-AM ratio is ``almost constant", with probability
approaching $1$ as $n\to\infty$. Here probability means normalized
area over a suitable unit sphere (a level set of the arithmetic mean 
$f(x) = \sum_{i=1}^n \alpha_{i,n} |x|_i$) which depends on the weights.
We prove that the GM-AM ratio cannot concentrate around values larger
than $e^{-\gamma}\approx 0.5615$, where $\gamma$ is Euler's constant. 
On the other hand, for every $t\in [0, e^{-\gamma}]$ it is possible to find
a family of weights such that concentration takes place around $t$.
In particular, when all weights are equal (to $1/n$ for $n = 2, 3, \dots$)
there is concentration around $e^{-\gamma}$, a result  to
some extent anticipated in \cite[Theorem 5.1]{Gl}. 

For completeness, we also study Gluskin and Milman's 
equal weights modification of the GM-AM ratio: 
$\prod_{i=1}^n |y_i|^{1/n}/
\sqrt{n^{-1} \sum_{i=1}^n y^2_i}$. Here the natural choice of
probability is  the uniform measure $P^{n-1}_2$
on $\mathbb{S}_2^{n-1}:= \{\|y\|_2 = 1\}$, where  
$\|y\|_2:= \sqrt{\sum_{i=1}^n y^2_i}$
is the $\ell_2^n$ norm on $\mathbb{R}^{n}$. 
For $n>>1$ and on 
$\mathbb{S}_2^{n-1}$, the preceding ratio concentrates
around $\sqrt{2} \exp\left[\Gamma^\prime(1/2)/(2\Gamma(1/2))\right]\approx 0.5298$.

The method of proof used here is essentially the same as in \cite{GluMi}:  Compute the
$s$-moment of $\prod_{i=1}^n |x|_i^{\alpha_{i,n}}$ and 
then  use Chebyshev's inequality. The differences lie in the fact 
that we consider the usual AM-GM inequality rather than a variant of
it, we do not restrict ourselves to the equal weights case, and 
 we  optimize over $s$ to obtain concentration results 
 rather than lower
bounds.

The statistical properties of the  ratio between the geometric and the
arithmetic means, in addition to its mathematical interest, may be relevant in those areas where this ratio is used as a measure of homogeneity or information, such as, for instance, radar imaging
(cf. \cite{BMTE}, \cite{Wo}).

\section{Definitions and results.}

Given  $n\ge 2$,  weights
$\alpha_{i,n} > 0$ with $\sum_{i=1}^n \alpha_{i,n} = 1$,
and $x = (x_1,\dots,x_n) \in \mathbb{R}^n$, 
the AM-GM inequality states that
\begin{equation}\label{AMGM} 
\prod_{i=1}^n |x|_i^{\alpha_{i,n}}  
\le 
 \sum_{i=1}^n \alpha_{i,n} |x|_i.
\end{equation}
Assume $x\ne 0$.
Since the middle term of 
\begin{equation}\label{ratio1} 
0\le 
\frac{\prod_{i=1}^n |x_i|^{\alpha_{i,n}}}{ \sum_{i=1}^n \alpha_{i,n}|x_i|}
\le 1
\end{equation}
is a positive homogeneous function of degree zero, it is constant
on the rays $t v$, where $t \in \mathbb{R}$ and $v \in \mathbb{R}^n$.
Thus, we may fix any suitable $t$ and then select 
$v\in \left\{\sum_{i=1}^n \alpha_{i,n}|x_i| = t\right\}$ ``at random", that is, uniformly over this level set. Observe that the
 notion of random choice we use depends on the
sequence $\alpha_{n} := (\alpha_{1,n},\dots,\alpha_{n,n})$. 
Let us renormalize the weights by setting
\begin{equation}\label{ai}
a_{i,n}:= n \alpha_{i,n}.
\end{equation}
Define the unit ball in $\mathbb{R}^n$ associated to 
$a_{n} := (a_{1,n},\dots, a_{n,n})$ by 
$\mathbb{B}^n_{a_n} := \{\sum_{i=1}^n  a_{i,n}|x_i| \le 1\}$,
and denote the corresponding norm and unit sphere by 
$\|\cdot\|_{a_n}$ and 
$\mathbb{S}^{n-1}_{a_n} := \{\sum_{i=1}^n a_{i,n}|x_i| = 1\}$
respectively. In the case of equal weights,
$a_{i,n} = 1$, $\|\cdot\|_{a_n}$ is just the $\ell_1$-norm
$\|\cdot\|_{1}$, 
$\mathbb{B}^n_{a_n}$ the cross-polytope $\mathbb{B}^n_{1}$,
and $\mathbb{S}^{n-1}_{a_n}$ the $\ell_1$-unit sphere $\mathbb{S}^{n-1}_{1}$.
For arbitrary $a_n$, $\mathbb{B}^n_{a_n}$ and $\mathbb{S}^{n-1}_{a_n}$
are simply linear images of $\mathbb{B}^n_{1}$,
and    $\mathbb{S}^{n-1}_{1}$.

To select  $v\in \{\sum_{i=1}^n a_{i,n}|x_i| = t\}$ at random, 
by  homogeneity we can
just set $t=1$, and then
use the normalization of area on $\mathbb{S}^{n-1}_{a_n}$.  
Denote by $\mathcal{H}^s$ the $s$-dimensional
Hausdorff measure on $\mathbb{R}^n$. We adopt the convention that  $\mathcal{H}^s$ is defined via the Euclidean $\ell_2^n$ metric, and normalized by the factor
$\pi^{s/2}/\Gamma(1+s/2)$, so if $s=d$ is a positive natural number, the cube of sidelength
1 has Hausdorff $d$ measure 1. A consistent choice of normalizing factor is required, for 
instance,  
to use Fubini's Theorem, or more generally, the coarea formula.
We denote by   $P_{a_n}^{n-1}$ the uniform probability on 
$\mathbb{S}^{n-1}_{a_n}$, that is 
$P_{a_n}^{n-1} = c \mathcal{H}^{n-1}$, where $c$ is
chosen so that $P_{a_n}^{n-1}(\mathbb{S}^{n-1}_{a_n}) = 1$. Occasionally,
and for simplicity, we will utilize absolute value signs to denote volumes and areas. 

The first three results deal with families of uniformly bounded
renormalized weights. The fourth considers a sequence with some weights
approaching $\infty$, which yields  concentration around zero.

\begin{theorem} \label{looseconc} Suppose that there exist an $M$ 
such that $ a_{i,n} \le M$ for all $n \ge 2$ and all $i= 1,\dots, n$.
Let $\gamma$ be Euler's constant, let $k, \varepsilon > 0$, and let 
$P_{a_n}^{n-1}$ be the uniform
probability on $\mathbb{S}_{a_n}^{n-1}$. Then there exists an 
$N = 
N(k,\varepsilon)$ such that for every $n\ge N$, the ratio 
 of the geometric mean over the arithmetic mean
on $\mathbb{S}_{a_n}^{n-1}$, given by $n\prod_{i=1}^n |x_i|^{\alpha_{i,n}}$,
 satisfies
\begin{equation}\label{concentration1}
P_{a_n}^{n-1}\left\{(1 - \varepsilon) \frac{e^{-\gamma}}{M}
 < n \prod_{i=1}^n |x_i|^{\alpha_{i,n}}
 < (1 + \varepsilon) e^{-\gamma} \right\} \ge 
1 - \frac{1}{n^k}.
\end{equation}
\end{theorem}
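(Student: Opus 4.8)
Write $R(x):=n\prod_{i=1}^n|x_i|^{\alpha_{i,n}}$ for the ratio appearing in \eqref{concentration1} and $f(x):=\sum_{i=1}^n a_{i,n}|x_i|$, so that $\mathbb{S}^{n-1}_{a_n}=f^{-1}(1)$. The plan is to run the moment method of the introduction: compute $\mathbb{E}_{P^{n-1}_{a_n}}[R^s]$ for every real $s$ with $|s|<n/M$, and apply Markov's inequality twice, to $R^s$ and to $R^{-s}$, at an exponent $s=s(n)$ tuned to beat the rate $n^{-k}$. First I would pass from the sphere to $\mathbb{R}^n$. The key point is that $|\nabla f|\equiv\bigl(\sum_i a_{i,n}^2\bigr)^{1/2}=:\|a_n\|_2$ is constant a.e., so for any $G\ge 0$ that is positively homogeneous of degree $\beta>-n$, the coarea formula together with the scaling $f^{-1}(t)=t\,\mathbb{S}^{n-1}_{a_n}$ and $\int_0^\infty t^{\beta+n-1}e^{-t}\,dt=\Gamma(\beta+n)$ give
\[
\|a_n\|_2\int_{\mathbb{R}^n}G(x)\,e^{-f(x)}\,dx=\Gamma(\beta+n)\int_{\mathbb{S}^{n-1}_{a_n}}G\,d\mathcal{H}^{n-1}.
\]
Dividing the instance $(G,\beta)=(R^s,s)$ by the instance $(G,\beta)=(1,0)$ cancels $\|a_n\|_2$ and the normalizing constant of $P^{n-1}_{a_n}$; the numerator integral $\int_{\mathbb{R}^n}R^s e^{-f}$ then factors over coordinates into the elementary integrals $\int_{-\infty}^{\infty}|t|^{s\alpha_{i,n}}e^{-a_{i,n}|t|}\,dt=2\Gamma(1+s\alpha_{i,n})a_{i,n}^{-1-s\alpha_{i,n}}$, which converge precisely when $s\alpha_{i,n}>-1$, hence for every $i$ once $|s|<n/M$. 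A short simplification using $a_{i,n}=n\alpha_{i,n}$ then produces the closed form
\[
\mathbb{E}_{P^{n-1}_{a_n}}\!\bigl[R^s\bigr]=\frac{n^s\,\Gamma(n)}{\Gamma(n+s)}\;\beta_n^{\,s}\prod_{i=1}^n\Gamma(1+s\alpha_{i,n}),\qquad
\beta_n:=\exp\!\Bigl(-\tfrac1n\sum_{i=1}^n a_{i,n}\ln a_{i,n}\Bigr).
\]

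Next I would extract the asymptotics, uniformly over the family. From $\ln a_{i,n}\le\ln M$ and $\sum_i a_{i,n}=n$ one gets $\beta_n\ge 1/M$, while convexity of $t\mapsto t\ln t$ (Jensen applied to $\tfrac1n\sum_i$ gives $0\le\tfrac1n\sum_i a_{i,n}\ln a_{i,n}$) gives $\beta_n\le 1$; thus $\beta_ne^{-\gamma}\in[e^{-\gamma}/M,\,e^{-\gamma}]$, exactly the interval in \eqref{concentration1}. Since $\alpha_{i,n}\le M/n$ and $\sum_i\alpha_{i,n}^2\le M/n$, the expansion $\ln\Gamma(1+u)=-\gamma u+O(u^2)$ (uniform for $|u|\le\tfrac12$) gives $\prod_i\Gamma(1+s\alpha_{i,n})=e^{-\gamma s}e^{O(Ms^2/n)}$, and $n^{\pm s}\Gamma(n)/\Gamma(n\pm s)=1+O(s^2/n)$ by a standard $\Gamma$-estimate. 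Hence, uniformly over all weight families satisfying the hypothesis,
\[
\mathbb{E}_{P^{n-1}_{a_n}}\!\bigl[R^{\pm s}\bigr]=\bigl(\beta_ne^{-\gamma}\bigr)^{\pm s}\,e^{O(Ms^2/n)}\qquad(0<s<n/M).
\]
Finally, Markov's inequality applied to $R^s$, together with $\beta_n\le 1$, gives $P^{n-1}_{a_n}\{R\ge(1+\varepsilon)e^{-\gamma}\}\le(1+\varepsilon)^{-s}e^{O(Ms^2/n)}$, and Markov applied to $R^{-s}$, together with $M\beta_n\ge 1$, gives $P^{n-1}_{a_n}\{R\le(1-\varepsilon)e^{-\gamma}/M\}\le(1-\varepsilon)^{s}e^{O(Ms^2/n)}$. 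Choosing $s=s(n)$ to be a suitable constant multiple of $\ln n$ — large enough that both $(1+\varepsilon)^{-s}$ and $(1-\varepsilon)^{s}$ are at most $\tfrac14 n^{-k}$ — makes each of these probabilities at most $\tfrac12 n^{-k}$ for all $n\ge N(k,\varepsilon)$; since the two excluded events are disjoint, \eqref{concentration1} follows by subtraction.

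The main obstacle is the balancing in the last step: the exponent $s$ must grow at least like a multiple of $\ln n$ to deliver the prescribed rate $n^{-k}$, yet must remain $o(\sqrt n)$ so that the error factor $e^{O(Ms^2/n)}$ coming from the $\Gamma$-expansion stays bounded (and so that $s<n/M$, needed for the negative moments, persists). Since $\ln n\ll\sqrt n$ there is a comfortable window, and because every estimate enters only through the single constant $M$, the resulting $N$ depends on $k$ and $\varepsilon$ alone. One should also verify that the expansion $\ln\Gamma(1+u)=-\gamma u+O(u^2)$ is applied uniformly in $i$, which is immediate once $sM/n\le\tfrac12$.
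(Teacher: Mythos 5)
Your proposal is correct and follows essentially the same route as the paper: the same closed-form moment of the geometric mean over $\mathbb{S}^{n-1}_{a_n}$ (obtained via the coarea formula and the factorizing exponential weight, i.e.\ Lemma \ref{lemma1} in disguise), Chebyshev/Markov on both tails, and the same two weight estimates, namely $\prod_i a_{i,n}^{a_{i,n}}\ge 1$ (Lemma \ref{lemmalagran}, which you get from Jensen) and the geometric mean of the $a_{i,n}$ being at most $M$. The only difference is bookkeeping: you let the moment exponent grow like $\log n$ and Taylor-expand $\ln\Gamma(1+u)$ near $u=0$ together with a standard $\Gamma(n+s)/\Gamma(n)$ ratio estimate, whereas the paper fixes a small $s_0$ (so the moment order grows linearly in $n$) and uses Lemmas \ref{lemmagamma}--\ref{lemmalagran} plus the Stirling expansion; both tunings fall in the admissible window and yield the stated $n^{-k}$ rate.
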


In the equal weights case we can set $M=1$, obtaining concentration 
around $e^{-\gamma}\approx 0.5615$.

\begin{corollary} \label{conceqwe} With the same notation as in the preceding
theorem, suppose additionally that $\alpha_{i,n}=1/n$ for $i=1,\dots,n$. Then  there exists an $N = 
N(k,\varepsilon)$ such that for every $n\ge N$, on $\mathbb{S}_{1}^{n-1}$
we have
\begin{equation}\label{concentrationeqwe1}
P_{1}^{n-1}\left\{(1 - \varepsilon)  e^{-\gamma}
 < n \prod_{i=1}^n |x_i|^{1/n}
 < (1 + \varepsilon)  e^{-\gamma} \right\}\ge 
1 - \frac{1}{n^k}.
\end{equation}
\end{corollary}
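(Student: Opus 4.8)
The plan is to deduce the corollary directly from Theorem~\ref{looseconc} by choosing the optimal value of the bound $M$. In the equal-weights case $\alpha_{i,n} = 1/n$, the renormalized weights of \eqref{ai} are $a_{i,n} = n\alpha_{i,n} = 1$ for every $i$ and every $n$, so the uniform bound hypothesis $a_{i,n} \le M$ of Theorem~\ref{looseconc} is satisfied with $M = 1$ (indeed with equality). Moreover, as observed in the discussion following \eqref{ai}, when $a_{i,n} \equiv 1$ the norm $\|\cdot\|_{a_n}$ is the $\ell_1^n$ norm $\|\cdot\|_1$, the sphere $\mathbb{S}^{n-1}_{a_n}$ is the cross-polytope boundary $\mathbb{S}^{n-1}_1$, and the uniform probability $P^{n-1}_{a_n}$ coincides with $P^{n-1}_1$; the consistent Hausdorff-measure normalization fixed in the text guarantees that the measure used to define $P^{n-1}_1$ is exactly the one used in the theorem for the general $P^{n-1}_{a_n}$.

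With these identifications in place, I would simply substitute $M = 1$ into the conclusion \eqref{concentration1} of Theorem~\ref{looseconc}. The left endpoint $(1-\varepsilon)e^{-\gamma}/M$ becomes $(1-\varepsilon)e^{-\gamma}$, the right endpoint $(1+\varepsilon)e^{-\gamma}$ is unchanged, and the geometric-to-arithmetic ratio $n\prod_{i=1}^n |x_i|^{\alpha_{i,n}}$ becomes $n\prod_{i=1}^n |x_i|^{1/n}$; the probability estimate $\ge 1 - n^{-k}$ and the threshold $N = N(k,\varepsilon)$ carry over verbatim. This yields precisely \eqref{concentrationeqwe1} for all $n \ge N(k,\varepsilon)$.

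There is no genuine obstacle here: all the work is already contained in Theorem~\ref{looseconc}, and the corollary merely records that in the equal-weights case the two (in general distinct) constants $e^{-\gamma}/M$ and $e^{-\gamma}$ bounding the loose concentration interval collapse to the single value $e^{-\gamma}$, upgrading the loose bound to genuine concentration. The only points meriting a line of verification are that $M = 1$ is an admissible choice in the hypothesis of the theorem --- which it is, since $1$ bounds every $a_{i,n}$ --- and the identification of $\mathbb{S}^{n-1}_1$ and $P^{n-1}_1$ with the objects appearing in the theorem, both of which are immediate from the definitions.
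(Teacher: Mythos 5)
Your proposal is correct and is exactly the paper's own route: the corollary is obtained by noting that $a_{i,n}\equiv 1$ allows $M=1$ in Theorem \ref{looseconc}, whereupon the lower endpoint $(1-\varepsilon)e^{-\gamma}/M$ collapses to $(1-\varepsilon)e^{-\gamma}$ and $\mathbb{S}^{n-1}_{a_n}$, $P^{n-1}_{a_n}$ reduce to $\mathbb{S}^{n-1}_{1}$, $P^{n-1}_{1}$. Nothing further is needed.
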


\begin{remark} The bounds given in Theorem \ref{looseconc} are optimal
in the sense that the upper bound for concentration is achieved in the equal weights case,
and it is also possible to obtain concentration around values arbitrarily
close to  $e^{-\gamma}/M$ for every $M\ge 1$, cf. Remark \ref{remcon} below.
\end{remark}

\begin{remark}\label{rem} When studying the concentration of the GM-AM ratios, in addition to the uniform probabilities
on the spheres $\mathbb{S}_{a_n}^{n-1}$, there are other reasonable  
notions of random choice of a vector. Suppose, for instance, that
we are in the equal weights case $a_{1,n} = 1$, with 
$\mathbb{S}_{a_n}^{n-1} = \mathbb{S}_{1}^{n-1}$. 
Denote this particular 
GM-AM ratio by $r_{1,n}$. We might, for example, select points from the whole space $\mathbb{R}^n$, according 
the exponential
density $2^{-n} e^{-\|x\|_1}$ on $\mathbb{R}^{n}$, or the uniform density over $\mathbb{B}_1^{n} (r)$. Of course, these
 probabilities give
more weight to small vectors than to large ones. But since 
the GM-AM ratio is homogeneous
of degree zero, this makes no difference:  The averages of the 
GM-AM ratio
are equal over all spheres 
$\mathbb{S}_1^{n-1}(r)$ centered at zero with radius $r > 0$, 
so by the coarea formula, the same answer 
 is obtained when using the exponential probability over
$\mathbb{R}^{n}$ or the uniform probability
 over $\mathbb{B}_1^{n} (r)$. 
 
 After having proved  Corollary  \ref{conceqwe}, I came
across   a related, large sample statistics result in the literature, cf. \cite[Theorem 5.1]{Gl},
which in 
the special case $k=1$ states that $r_{1,n} \to e^{-\gamma}$ with
probability 1 as $n\to\infty$, where the independent, identically 
distributed random variables $X_i$
take values in $[0,\infty )$ according to an
exponential distribution. Thus, probability refers to the corresponding
product probability in the countably infinite product of positive semiaxes. 
Informally, this result has the same content as Corollary  \ref{conceqwe}: If one chooses a large quantity of non-negative 
numbers at random, then with very high probability the GM-AM
ratio will be very close to $e^{-\gamma}$. Formally however,
Corollary  \ref{conceqwe} seems to be stronger, since by rewriting it
in terms of the exponential distribution, it yields the above result,
while the other direction does not  follow from the statement of
\cite[Theorem 5.1]{Gl} (a problem is that when defining
a probability on an infinite product many new measure zero sets
may appear, which might have large outer measure from the $n$
dimensional viewpoint). The proof  of \cite[Theorem 5.1]{Gl}
is not provided there, 
 so I do not know whether it
might yield
something similar to Corollary  \ref{conceqwe}.
\end{remark}

Concentration may occur around levels different from $e^{-\gamma}$. In fact,
given any $t\in (0, e^{-\gamma}]$ it is possible to find
a uniformly bounded (by a fixed $M = M(t)$) sequence of weights  such that 
 the ratio 
 of the geometric mean over the arithmetic mean
 concentrates around $t$. 
 Observe in the next result that $M^{-\frac{M -1}{M + 1}}$
 maps $\{1 \le M\}$ 
onto $(0,1]$.

\begin{theorem} \label{concarb} Let $1 \le M$ and let $L>> M$. For $n \ge L$ 
 define
the sequence $\{a_{i,n}: i=1,\dots, n\}$ of weights as follows: 
$a_{j,n}= M$ whenever $ j\le n /(M + 1)$, $a_{j,n}= 1/M$ whenever 
$ j\ge 1 + n /(M + 1)$,
and if $j$ is the least integer strictly larger than $n /(M + 1)$, 
then $a_{j,n} \in [1/M, M]$ is chosen so $\sum_{i=1}^n a_{i,n} = n$. 
Then there exists an $N = 
N(k,\varepsilon)$ such that for every $n\ge N$, 
 the GM-AM ratio 
 on $\mathbb{S}_{a_n}^{n-1}$ 
 satisfies
\begin{equation}\label{concentrationarb}
P_{a_n}^{n-1}\left\{(1 - \varepsilon) \frac{e^{-\gamma}}{M^{\frac{M -1}{M + 1}}}
 < n \prod_{i=1}^n |x_i|^{\alpha_{i,n}}
 < (1 + \varepsilon) \frac{e^{-\gamma}}{M^{\frac{M -1}{M + 1}}}\right\}
 \ge 
1 - \frac{1}{n^k}.
\end{equation}
\end{theorem}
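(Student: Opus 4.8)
The plan is to follow the moment method of \cite{GluMi} already announced in the introduction, specializing the computation to the piecewise-constant weight sequence described in the statement. First I would compute, for a suitable exponent $s>0$, the $s$-th moment of the geometric mean against the uniform probability $P_{a_n}^{n-1}$ on $\mathbb{S}^{n-1}_{a_n}$, i.e. $M_s(n) := \int_{\mathbb{S}^{n-1}_{a_n}} \left(\prod_{i=1}^n |x_i|^{\alpha_{i,n}}\right)^s \, dP_{a_n}^{n-1}$. Using the coarea formula (as justified by the normalization convention fixed in the text) this integral over the sphere can be replaced by an integral over $\mathbb{R}^n$ against the exponential-type density adapted to $\|\cdot\|_{a_n}$, which then factors as a product of one-dimensional Gamma integrals. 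Concretely, since the GM-AM ratio is homogeneous of degree zero, averaging over $\mathbb{S}^{n-1}_{a_n}$ equals the average over $\mathbb{R}^n$ under the product density $\prod_i a_{i,n} e^{-a_{i,n}|x_i|}/2$, and one gets $M_s(n) = \prod_{i=1}^n a_{i,n}^{-s\alpha_{i,n}} \Gamma(1 + s\alpha_{i,n})$, up to the exact form of the normalizing constant coming from the coarea passage. This is the one computation I would carry out carefully.

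Next I would analyze the asymptotics of $\log M_s(n)$ as $n\to\infty$ with $s$ scaled appropriately, say $s = \beta n$ for a constant $\beta>0$ to be optimized, exactly as in the equal-weights case. Here the structure of the weight sequence is crucial: roughly a fraction $1/(M+1)$ of the indices have $a_{i,n}=M$ (hence $\alpha_{i,n}=M/n$) and a fraction $M/(M+1)$ have $a_{i,n}=1/M$ (hence $\alpha_{i,n}=1/(Mn)$), with one leftover index whose contribution is negligible. Substituting into $\log M_s(n) = \sum_i \left(-s\alpha_{i,n}\log a_{i,n} + \log\Gamma(1+s\alpha_{i,n})\right)$ and using $\log\Gamma(1+u) = -\gamma u + O(u^2)$ for small $u$, the sum becomes $n$ times a convex combination of terms, yielding $\frac1n \log M_s(n) \to \frac{1}{M+1}\left(-s' M \log M\right) + (\text{analogous } 1/M \text{ term}) - \gamma \cdot (\text{weighted } s') + o(1)$, where $s'$ is the per-index scaling. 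The net effect is that the leading exponential rate of $M_{\beta n}(n)^{1/(\beta n)}$ converges to $e^{-\gamma} \cdot M^{-\frac{M-1}{M+1}}$, which is precisely the claimed concentration value $t$; the exponent $-\frac{M-1}{M+1}$ arises as $-\frac{1}{M+1}\cdot M + \frac{M}{M+1}\cdot\frac{1}{M} = \frac{1-M^2}{M(M+1)}$ once the logarithm of the weights is collected — I would double-check this arithmetic against the equal-weights normalization $M=1$, where it correctly gives exponent $0$ and value $e^{-\gamma}$.

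Once the moment asymptotics are in hand, the concentration itself is a routine Chebyshev (Markov) argument applied in both directions. For the upper tail, $P_{a_n}^{n-1}\{n\prod |x_i|^{\alpha_{i,n}} > (1+\varepsilon)t\} \le \left((1+\varepsilon)t/n\right)^{-s} M_s(n)$ for every $s>0$; choosing $s = \beta n$ with $\beta$ the optimizer makes the right side decay like $\left((1+\varepsilon)\right)^{-\beta n}$ times subexponential corrections, hence eventually below $\tfrac12 n^{-k}$. For the lower tail one applies the same inequality to the reciprocal (or to negative powers $s<0$, noting $\Gamma(1+s\alpha_{i,n})$ still makes sense for small negative $s\alpha_{i,n}$), again optimizing the scaled exponent; this controls $P_{a_n}^{n-1}\{n\prod |x_i|^{\alpha_{i,n}} < (1-\varepsilon)t\}$ by $\tfrac12 n^{-k}$ for $n$ large. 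Adding the two bounds and taking $N=N(k,\varepsilon)$ accordingly gives \eqref{concentrationarb}.

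I expect the main obstacle to be the error control in the passage from the finite-$n$ moment product to its exponential rate, specifically handling the $O(u^2)$ term in $\log\Gamma(1+u)$ uniformly: one needs $\sum_i (s\alpha_{i,n})^2 = O(s^2/n) = O(\beta^2 n)\cdot O(1/n)\cdot\ldots$ to be $o(n)$, which holds since $\alpha_{i,n}=O(1/n)$ and $s=O(n)$ give $(s\alpha_{i,n})^2 = O(1)$ summed $n$ times, i.e. $O(n)$ — so in fact these corrections are \emph{not} automatically negligible and the optimization in $\beta$ must be done keeping the full $\sum_i \log\Gamma(1+s\alpha_{i,n})$, then Laplace/saddle-point estimated; alternatively, one works directly with the exact product of Gammas and shows the optimal $\beta$ tends to a finite limit determined by a transcendental equation involving digamma values, the same way the equal-weights case produces $e^{-\gamma}$. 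Reducing everything to at most two distinct Gamma factors (because there are only two weight values) is what makes this tractable, and is the reason Theorem \ref{concarb} is stated for this particular two-level family rather than general bounded weights.
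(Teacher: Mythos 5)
Your overall strategy --- compute the exact moment of the geometric mean over $\mathbb{S}^{n-1}_{a_n}$ as a product of Gamma factors, apply Chebyshev in both directions with a small scaled exponent, and read off the concentration point from $\prod_i a_{i,n}^{a_{i,n}/n}\to M^{\frac{M-1}{M+1}}$ for this two-level family --- is the paper's strategy (Lemma \ref{lemma1}, Lemma \ref{lemmaprod}, and the proof of Theorem \ref{looseconc}, reused with this one new limit). But the one formula you commit to is wrong in a way that matters. The average over $\mathbb{S}^{n-1}_{a_n}$ coincides with the average against the exponential density only for integrands homogeneous of degree zero; $\bigl(\prod_i|x_i|^{\alpha_{i,n}}\bigr)^{s}$ is homogeneous of degree $s$, and the correct identity (Lemma \ref{lemma1}, with the exponent rescaled) is
\[
E\left(\prod_{i=1}^n |x_i|^{s\alpha_{i,n}}\right)=\frac{\Gamma(n)}{\Gamma(n+s)}\prod_{i=1}^n\frac{\Gamma\left(1+s\alpha_{i,n}\right)}{a_{i,n}^{s\alpha_{i,n}}}.
\]
The factor $\Gamma(n)/\Gamma(n+s)$ is not a ``normalizing constant coming from the coarea passage'': with $s=\beta n$ it contributes $\bigl(\Gamma(n)/\Gamma(n+s)\bigr)^{1/s}\approx e\,n^{-1}(1+\beta)^{-1/\beta}$, and this $n^{-1}$ is exactly what cancels the $n^{s}$ produced by your Chebyshev bound $\bigl((1+\varepsilon)t/n\bigr)^{-s}M_s(n)$. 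With your factorless $M_s(n)$ that bound is of order $\bigl(n/(1+\varepsilon)\bigr)^{\beta n}\to\infty$ and the argument collapses; correspondingly, your claim that $M_{\beta n}(n)^{1/(\beta n)}\to e^{-\gamma}M^{-\frac{M-1}{M+1}}$ is consistent only with the incorrect formula, since with the correct one it is $n\,M_{\beta n}(n)^{1/(\beta n)}$ that tends (first $n\to\infty$, then $\beta\to0$) to that constant. Restoring the Gamma-ratio factor makes your pieces fit together and reduces the proof to the paper's.

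Your closing worry about the $O(u^2)$ terms, and the Laplace/saddle-point or transcendental-equation machinery you propose for it, is unnecessary. With $s=\beta n$ the Gamma arguments are $1+\beta a_{i,n}$ with $\beta a_{i,n}\le\beta M$, so $\frac{1}{\beta n}\sum_i\log\Gamma(1+\beta a_{i,n})=-\gamma+O(\beta M^{2})$ uniformly in $n$ (this is Lemmas \ref{lemmagamma} and \ref{lemmaprod}); since the margin you must beat is $\log(1+\varepsilon)$, it suffices to fix $\beta$ small depending on $\varepsilon$ and $M$ --- no optimization of $\beta$ is needed, because the target probability $1-n^{-k}$ only costs the factor $(2n^k)^{1/\beta n}\to1$. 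The paper's $s$ (its notation) is precisely such a fixed small parameter, used once for $s>0$ and once for $s<0$, with the single additional observation $\lim_n\prod_i a_{i,n}^{a_{i,n}/n}=M^{\frac{M-1}{M+1}}$ handling both tails here. Finally, a small arithmetic point: $-\tfrac{1}{M+1}\cdot M+\tfrac{M}{M+1}\cdot\tfrac1M=-\tfrac{M-1}{M+1}$, not $\tfrac{1-M^{2}}{M(M+1)}$; your final exponent is correct, the intermediate simplification is not.
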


\begin{remark}\label{remcon}  Note that $1/M^{\frac{M -1}{M + 1}}$ is close to $1/M$ provided 
$M$ is large. To show optimality of the lower  bound in Theorem \ref{looseconc} for small values of $M > 1$, we can modify the finite
sequences given in the preceding theorem by using $1/M^j$ as the small weights (with $j >>1$)
 instead of $1/M$,
and a few more large weights $M$, in order to add up to $n$. Then
 the  argument proceeds as in the proof of Theorem \ref{concarb}.
\end{remark}

Suppose next that the largest weights for the
$n$-th averages are given
by a function $f(n) < n$ with $\lim_n f(n) = \infty$. Arguing as in Theorem \ref{concarb}  we obtain 
concentration of the GM-AM ratio around 0. 

\begin{theorem} \label{conc0} Let $f(n) < n$ 
satisfy $\lim_n f(n) = \infty$, and 
 define
$\{a_{i,n}: i=1,\dots, n\}$ by
$a_{j,n}= f(n)$ whenever $ j\le n /(f(n) + 1)$, $a_{j,n}= 1/f(n)$ whenever 
$ j\ge 1 + n /(f(n) + 1)$,
and if $j$ is the least integer strictly larger than $n /(f(n) + 1)$, 
then $a_{j,n} \in [1/f(n), f(n)]$ is chosen so $\sum_{i=1}^n a_{i,n} = n$. 
Then there exists an $N = 
N(k,\varepsilon)$ such that for every $n\ge N$, 
 the GM-AM ratio 
 on $\mathbb{S}_{a_n}^{n-1}$ 
 satisfies
\begin{equation}\label{concentrationarb}
P_{a_n}^{n-1}\left\{ n \prod_{i=1}^n |x_i|^{\alpha_{i,n}}
 < \varepsilon \right\} \ge 
1 - \frac{1}{n^k}.
\end{equation}
\end{theorem}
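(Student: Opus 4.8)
The plan is to use the moment method of Theorems~\ref{looseconc} and~\ref{concarb}: compute the $s$-th moment of the GM--AM ratio with respect to $P_{a_n}^{n-1}$ and then apply Markov's inequality, optimizing over $s$. Since $n\prod_i|x_i|^{\alpha_{i,n}}\ge 0$, concentration at $0$ only requires bounding the upper tail $\{n\prod_i|x_i|^{\alpha_{i,n}}\ge\varepsilon\}$, so only positive moments are needed (for concentration around a positive value one would also have to bound a lower tail). Exactly as in the proof of Theorem~\ref{concarb} --- using that the uniform measure on $\mathbb{S}_{a_n}^{n-1}$ is the cone measure of $\mathbb{B}_{a_n}^n$, that the product exponential density $\prod_{i=1}^n\frac{a_{i,n}}{2}e^{-a_{i,n}|x_i|}$ on $\mathbb{R}^n$ has radial coordinate $\sum_i a_{i,n}|x_i|\sim\mathrm{Gamma}(n,1)$ and angular coordinate distributed as $P_{a_n}^{n-1}$, independently, and that $\bigl(a_{i,n}|X_i|/\sum_j a_{j,n}|X_j|\bigr)_i$ is uniform on the simplex --- one obtains the same moment identity used there,
\[
\mathbb{E}_{P_{a_n}^{n-1}}\Bigl[\Bigl(n\textstyle\prod_{i=1}^n|x_i|^{\alpha_{i,n}}\Bigr)^{\!s}\Bigr]
= n^{s}\Bigl(\textstyle\prod_{i=1}^n a_{i,n}^{-\alpha_{i,n}}\Bigr)^{\!s}\frac{\Gamma(n)}{\Gamma(n+s)}\prod_{i=1}^n\Gamma(1+s\alpha_{i,n}),
\]
now with the constant $M$ replaced by $f(n)$.

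I would then extract the asymptotics. A direct computation with the weights above gives $\sum_{i=1}^n\alpha_{i,n}\log a_{i,n}\ge\tfrac14\log f(n)$ for $n$ large --- it equals $\tfrac{f(n)-1}{f(n)+1}\log f(n)$ up to lower-order corrections, and the single ``fractional'' weight $a_{j,n}\in[1/f(n),f(n)]$ contributes at least $-1/(en)$ --- so $\prod_i a_{i,n}^{\alpha_{i,n}}\to\infty$ because $f(n)\to\infty$. Taking $s$ to be a positive integer, so that $\Gamma(n)/\Gamma(n+s)\le n^{-s}$ and the factors $n^{\pm s}$ cancel, Markov's inequality yields
\[
P_{a_n}^{n-1}\Bigl\{n\textstyle\prod_{i=1}^n|x_i|^{\alpha_{i,n}}\ge\varepsilon\Bigr\}
\le \exp\Bigl(-s\bigl(\textstyle\sum_i\alpha_{i,n}\log a_{i,n}-|\log\varepsilon|\bigr)+\textstyle\sum_i\log\Gamma(1+s\alpha_{i,n})\Bigr).
\]
Because the coefficient of $-s$ tends to $+\infty$, the task is to choose $s=s(n,k,\varepsilon)$ so that this is at most $n^{-k}$; the right choice is $s$ a suitable multiple of $\log n/\log f(n)$, which is bounded when $f(n)$ grows at least polynomially and tends to $\infty$ when $f(n)$ grows more slowly.

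The main obstacle is controlling the term $\sum_i\log\Gamma(1+s\alpha_{i,n})$, since the convenient estimate $\log\Gamma(1+x)=-\gamma x+O(x^2)$ is only available for small $x$, whereas for these weights $\alpha_{i,n}$ can be of order $1$ when $f(n)$ is comparable to $n$. I would split accordingly. If $f(n)<n^{1/(k+1)}$, then $\max_i s\alpha_{i,n}\le sf(n)/n\to 0$ and $\sum_i\alpha_{i,n}^2=O(f(n)/n)$, whence $\sum_i\log\Gamma(1+s\alpha_{i,n})=-\gamma s+O(s^2 f(n)/n)=-\gamma s+o(1)$; then with $s\asymp k\log n/\log f(n)$ (and the implied constant chosen large enough) the exponent is $\le -s\cdot\tfrac14\log f(n)+o(1)\le -k\log n$ for $n$ large. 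If instead $f(n)\ge n^{1/(k+1)}$, some $\alpha_{i,n}$ are of order $1$ and one uses the crude bound $\sum_i\log\Gamma(1+s\alpha_{i,n})\le s\log\Gamma(1+s)$, valid because at most $s$ indices satisfy $s\alpha_{i,n}\ge 1$ (as $\sum_i\alpha_{i,n}=1$); here $\tfrac14\log f(n)\ge\tfrac{\log n}{4(k+1)}$, so a fixed positive integer $s\asymp k^2$ makes $s\cdot\tfrac14\log f(n)$ exceed $k\log n$ plus the now-bounded terms $s\log\Gamma(1+s)$ and $s|\log\varepsilon|$, again for $n$ large. Combining the two regimes, and noting that the fractional weight contributes at most $\log\Gamma(1+s)$ to the $\Gamma$-sum (already absorbed above), gives the stated estimate for all $n\ge N(k,\varepsilon)$.
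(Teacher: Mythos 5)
Your argument is correct, but it executes the moment method differently from the paper. The paper dispatches Theorem \ref{conc0} in one line: it is ``proven in the same way as'' Theorem \ref{concarb}, i.e.\ one applies Chebyshev's inequality to the moment of Lemma \ref{lemma1} with a \emph{fixed} small $s>0$ (in your normalization this corresponds to an exponent proportional to $n$), and the bound is driven by the fact that $\prod_i a_{i,n}^{a_{i,n}/n}\approx f(n)^{(f(n)-1)/(f(n)+1)}\to\infty$; strictly speaking this route needs a Stirling estimate for $\Gamma(1+sf(n))$, because Lemma \ref{lemmaprod} assumes uniformly bounded weights and does not apply here --- a point the paper leaves implicit. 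You instead take $s$ a positive integer chosen per $n$ (of order $\log n/\log f(n)$ when $f(n)<n^{1/(k+1)}$, and a fixed $s\asymp k^{2}$ otherwise), exploit $\Gamma(n)/\Gamma(n+s)\le n^{-s}$, and control $\sum_i\log\Gamma(1+s\alpha_{i,n})$ either by $\log\Gamma(1+x)=-\gamma x+O(x^{2})$ or by the counting bound $\le s\log\Gamma(1+s)$ (valid since at most $s$ indices have $s\alpha_{i,n}\ge 1$, $\alpha_{i,n}<1$, and $\Gamma\le 1$ on $[1,2]$). Your moment identity is exactly Lemma \ref{lemma1} after the reparametrization $s\mapsto s/n$, and the cone-measure/exponential representation you invoke is legitimate for these balls because all facets of $\mathbb{S}_{a_n}^{n-1}$ are equidistant from the origin (which is what the constancy of $|Jf|=\|a_n\|_2$ expresses in the paper's coarea computation). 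What your route buys is a self-contained treatment of the unbounded weights with no asymptotics of $\Gamma$ at infinity, at the price of a regime split and an $n$-dependent exponent; the paper's route is shorter given its lemmas but silently requires the extra Stirling step. Two minor imprecisions, neither a gap: the aside that $\sum_i\alpha_{i,n}\log a_{i,n}$ equals $\frac{f-1}{f+1}\log f$ up to lower order fails when $f(n)$ is so close to $n$ that no weight equals $f(n)$ and the single interpolating weight $\approx n-1$ carries the whole sum, though the inequality $\ge\frac14\log f(n)$ that you actually use still holds there; and in the first regime the bound $-s\cdot\frac14\log f(n)+o(1)$ silently absorbs $s|\log\varepsilon|$, which is justified only after noting $\log f(n)\to\infty$, as your setup does.
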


For completeness, we consider the equal weights variant of the  AM-GM inequality 
\begin{equation}\label{AMGMs} 
\prod_{i=1}^n |y_i|^{1/n}  
\le 
\sqrt{\frac{1}{n} \sum_{i=1}^n y^2_i}
\end{equation}
studied in \cite{GluMi}. The arguments are essentially the
same as in the previous cases, save that now the appropriate notion of random
selection is given by  the uniform probability $P_2^{n-1}$ on the euclidean unit sphere 
$\mathbb{S}_2^{n-1} = \{\|y\|_2 = 1\}$ (alternatively one might
use, for instance, the standard gaussian measure on $\mathbb{R}^n$
to obtain the same conclusion, by the reasons given in 
remark \ref{rem}).  On 
$\mathbb{S}_2^{n-1}$,
$
\prod_{i=1}^n |y_i|^{1/n}  
/
\sqrt{\frac{1}{n} \sum_{i=1}^n y^2_i}
=
\sqrt{n}\prod_{i=1}^n |y_i|^{1/n}.
$

\begin{theorem} \label{concs2} Let $k, \varepsilon > 0$ and let $P_2^{n-1}$ be the uniform
probability on  $\mathbb{S}_2^{n-1}$. Then there exists an $N = 
N(k,\varepsilon)$ such that for every $n\ge N$,  \begin{equation}\label{concentration2}
P_2^{n-1}\left\{(1 - \varepsilon)\sqrt{2} \exp \left(\frac{\Gamma^\prime 
\left(\frac{1}{2}\right)}{2 \Gamma\left(\frac{1}{2}\right)}\right) 
< \sqrt{n} \prod_{i=1}^n |y_i|^{1/n} < (1 + \varepsilon) \sqrt{2} \exp \left(\frac{\Gamma^\prime 
\left(\frac{1}{2}\right)}{2 \Gamma\left(\frac{1}{2}\right)}\right)  \right\}\ge 
1 - \frac{1}{n^k}.
\end{equation}
\end{theorem}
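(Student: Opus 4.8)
The plan is to follow the moment method of \cite{GluMi} exactly as announced, specialized to the Euclidean sphere $\mathbb{S}_2^{n-1}$ with its uniform probability $P_2^{n-1}$. First I would compute the $s$-th moment
\[
I_n(s) := \int_{\mathbb{S}_2^{n-1}} \left(\sqrt{n}\prod_{i=1}^n |y_i|^{1/n}\right)^s \, dP_2^{n-1}(y) = n^{s/2}\int_{\mathbb{S}_2^{n-1}} \prod_{i=1}^n |y_i|^{s/n}\, dP_2^{n-1}(y).
\]
The standard way to evaluate such an integral is to pass to the Gaussian measure: writing $y = g/\|g\|_2$ with $g$ a standard Gaussian vector, $\prod |y_i|^{s/n}$ and $\|g\|_2$ are independent (the direction and the length of a Gaussian vector are independent), so
\[
\mathbb{E}\,\prod_{i=1}^n |g_i|^{s/n} = \mathbb{E}\,\|g\|_2^{\,s}\cdot I_n(s)\, n^{-s/2}.
\]
Both sides factor into one-dimensional Gamma integrals: $\mathbb{E}|g_i|^{s/n} = 2^{s/(2n)}\Gamma\!\big(\tfrac12+\tfrac{s}{2n}\big)/\Gamma\!\big(\tfrac12\big)$ and $\mathbb{E}\|g\|_2^s = 2^{s/2}\Gamma\!\big(\tfrac{n+s}{2}\big)/\Gamma\!\big(\tfrac n2\big)$. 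Hence
\[
I_n(s) = n^{s/2}\,\frac{2^{s/2}\,\Gamma\!\big(\tfrac12+\tfrac{s}{2n}\big)^{n}\,\Gamma\!\big(\tfrac n2\big)}{\Gamma\!\big(\tfrac12\big)^{n}\,2^{s/2}\,\Gamma\!\big(\tfrac{n+s}{2}\big)} = n^{s/2}\,\frac{\Gamma\!\big(\tfrac12+\tfrac{s}{2n}\big)^{n}\,\Gamma\!\big(\tfrac n2\big)}{\Gamma\!\big(\tfrac12\big)^{n}\,\Gamma\!\big(\tfrac{n+s}{2}\big)}.
\]

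Next I would extract the asymptotics of $I_n(s)$ as $n\to\infty$ with $s$ fixed (and, crucially, uniformly for $s$ in a compact set, say $s\in[-k',k']$, since we will need moments of negative order for the lower bound). Using $\log\Gamma(1/2+u) = \log\Gamma(1/2) + u\,\psi(1/2) + O(u^2)$ with $u = s/(2n)$, we get $n\log\big(\Gamma(\tfrac12+\tfrac{s}{2n})/\Gamma(\tfrac12)\big) = \tfrac{s}{2}\psi(1/2) + O(1/n)$, where $\psi = \Gamma'/\Gamma$; and by Stirling $\Gamma(\tfrac n2)/\Gamma(\tfrac{n+s}{2}) = (n/2)^{-s/2}(1+O(1/n))$, so $n^{s/2}\Gamma(\tfrac n2)/\Gamma(\tfrac{n+s}{2}) \to 2^{s/2}$. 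Therefore
\[
\lim_{n\to\infty} I_n(s) = 2^{s/2}\exp\!\Big(\tfrac{s}{2}\,\tfrac{\Gamma'(1/2)}{\Gamma(1/2)}\Big) = \Big(\sqrt2\,\exp\!\big(\tfrac{\Gamma'(1/2)}{2\Gamma(1/2)}\big)\Big)^{s} =: \beta^{s},
\]
with the convergence uniform on compact $s$-sets, and $\beta\approx 0.5298$ the claimed constant.

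**Concentration via Chebyshev.** Finally I would run the two-sided Chebyshev argument and optimize over $s$. Let $R_n := \sqrt n\prod_i|y_i|^{1/n}$. For the upper tail, fix $\delta>0$; Markov's inequality gives $P_2^{n-1}\{R_n > (1+\delta)\beta\} \le I_n(s)/\big((1+\delta)\beta\big)^s = \big(I_n(s)/\beta^s\big)(1+\delta)^{-s}$ for every $s>0$. Since $I_n(s)/\beta^s\to 1$, for each fixed large $s$ this bound is $\le 2(1+\delta)^{-s}$ for $n$ large; choosing $s=s(k,\delta)$ large enough makes it $<n^{-k}/2$ — but one must be slightly careful: the "$n$ large" threshold depends on $s$, so one fixes $s$ first (depending only on $k,\delta$) using the uniform convergence, then picks $N$ accordingly. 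For the lower tail one uses negative moments: $P_2^{n-1}\{R_n < (1-\delta)\beta\} = P_2^{n-1}\{R_n^{-s} > ((1-\delta)\beta)^{-s}\} \le I_n(-s)/\big((1-\delta)\beta\big)^{-s} = \big(I_n(-s)/\beta^{-s}\big)(1-\delta)^{s}$, and again $I_n(-s)/\beta^{-s}\to 1$, so for fixed large $s$ and $n$ large this is $<n^{-k}/2$. Adding the two estimates and renaming $\delta=\varepsilon$ yields \eqref{concentration2}. The main obstacle, such as it is, is bookkeeping the order of quantifiers in the Chebyshev optimization — one cannot let $s\to\infty$ with $n$ uncoupled — and verifying that the error terms in the Stirling/Taylor expansions of $I_n(\pm s)$ are genuinely uniform for $s$ in the relevant compact range, so that a single $N=N(k,\varepsilon)$ works; both the moment computation and the limit are otherwise routine.
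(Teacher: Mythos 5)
Your moment computation is correct: passing to Gaussians and using the independence of direction and length gives exactly the Gluskin--Milman formula that the paper quotes (your $I_n(s)$ is $n^{s/2}E\bigl(\prod_i |y_i|^{s/n}\bigr)$, i.e.\ the quoted expectation with $s$ replaced by $s/n$), and your limit $I_n(s)\to\beta^s$ with $\beta=\sqrt2\,e^{\Gamma'(1/2)/(2\Gamma(1/2))}$ is right. But the concentration step has a genuine gap, and it is exactly at the point you flag and then dismiss. With $s$ \emph{fixed} (depending only on $k,\varepsilon$), your Markov bound for the upper tail is
\begin{equation*}
P_2^{n-1}\{R_n>(1+\delta)\beta\}\le \bigl(I_n(s)/\beta^s\bigr)(1+\delta)^{-s}\le 2(1+\delta)^{-s}
\end{equation*}
for $n$ large, which is a \emph{constant} independent of $n$. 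The theorem requires the bad event to have probability at most $n^{-k}$ for \emph{all} $n\ge N$, and $n^{-k}\to 0$, so no fixed $s$ can make a constant bound $2(1+\delta)^{-s}$ smaller than $n^{-k}/2$ eventually; the inequality you need holds only for finitely many $n$, not for all $n\ge N$. The same problem occurs for the lower tail. What your argument does prove is $R_n\to\beta$ in probability (so the probability in question tends to $1$), but not the polynomial rate $1-n^{-k}$.

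The repair is to let the moment order grow linearly in $n$, which is how the paper argues (mirroring the proof of Theorem \ref{looseconc}): apply Markov to $\prod_i|y_i|^{s}=\bigl(\prod_i|y_i|^{1/n}\bigr)^{sn}$ with a \emph{small fixed} $s>0$ (so the effective moment order is $sn\to\infty$), and choose the threshold $t=2n^k\,E\bigl(\prod_i|y_i|^{s}\bigr)$, so that Chebyshev immediately yields probability at least $1-\tfrac{1}{2n^k}$. The remaining work is to show that $\sqrt{n}\,t^{1/sn}$ lies below $(1+\varepsilon)\beta$: the factor $(2n^k)^{1/sn}\to1$ precisely because $sn\to\infty$, the factor $\bigl(\Gamma(\tfrac{1+s}{2})/\Gamma(\tfrac12)\bigr)^{1/s}$ is close to $\exp\bigl(\Gamma'(1/2)/(2\Gamma(1/2))\bigr)$ for $s$ small (L'H\^opital, as in Lemma \ref{lemmagamma}), and Stirling applied to $\bigl(\Gamma(\tfrac n2)/\Gamma(\tfrac{(1+s)n}{2})\bigr)^{1/sn}$ contributes $\sqrt{2}/\sqrt{n}$ up to a factor $e^{1/2}/(1+s)^{1/2s}$ that is close to $1$ for small $s$ (as in (\ref{factor1})). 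The lower bound is obtained the same way with small $s<0$. Note the contrast with your scheme: the relevant asymptotic regime is $s\to0$ with $n\gg 1/s$, not $n\to\infty$ at fixed (large) $s$; it is the growth of the moment order with $n$ that converts Chebyshev into a bound decaying like $n^{-k}$.
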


\begin{remark}  $\sqrt{2} \exp \left(\frac{\Gamma^\prime 
\left(\frac{1}{2}\right)}{2 \Gamma\left(\frac{1}{2}\right)}\right) 
\approx   0.5298$.
\end{remark}

\begin{remark} Since 
$ 
\sqrt{\frac{1}{n} \sum_{i=1}^n y^2_i}
\ge 
\frac{1}{n} \sum_{i=1}^n y_i
$ by Jensen's inequality, Theorem \ref{concs2}
shows that with arbitrarily high probability 
\begin{equation}\label{ratio1bb} 
r_{1, n} (x) :=
\frac{\prod_{i=1}^n |x_i|^{1/n}}{ \frac{1}{n}\sum_{i=1}^n |x_i|}
\ge (1 - \varepsilon)\sqrt{2} \exp \left(\frac{\Gamma^\prime 
\left(\frac{1}{2}\right)}{2 \Gamma\left(\frac{1}{2}\right)}\right) 
\end{equation}
on the {\em  euclidean} unit sphere  $\mathbb{S}_2^{n-1}$, provided $n$ is sufficiently large. Analogous remarks can be made
about the behavior of the GM-AM ratio obtained from (\ref{AMGMs})
on $\mathbb{S}_1^{n-1}$, by using Corollary \ref{conceqwe}.
\end{remark}

\begin{remark} Often concentration is a consequence of some type of
uniform Lipschitz behavior (cf., for instance, Levy concentration theorem in
$3\frac{1}{2}. 19$ p. 142 of \cite{Gro}). This is not the case 
here. Consider the equal weights ratio $r_{1,n}$, for instance. If we take as our
sample space  either $\mathbb{R}^{n}$ 
or $\mathbb{B}_1^{n} (r)$, then $r_{1,n}$ is not
even continuous, regardless of how $r_{1,n}(0)$ is defined. On
$\mathbb{S}_1^{n-1}$ the Lipschitz constant of $r_{1,n}$ depends on $n$:
Set  $x=(1/n,1/n, 1/n\dots, 1/n)$ and $y=(0, 2/n,1/n,\dots, 1/n)$.
Then 
$r_{1,n}(x) - r_{1,n}(y) = 1 -0 = 1$, while $\|x - y\|_1 = 2/n$. 
\end{remark}

\section{Lemmas and proofs.}

Let us recall the coarea formula 
(for additional information we refer the reader to \cite{Fe}, pp. 248-250, or \cite{EG}, pp. 117-119):
\begin{equation}\label{coarea}
\int_{\mathbb{R}^{n}} g(x) |Jf(x)| dx =
\int_{\mathbb{R}} \int_{\{f^{-1}(t)\}} g (x)  d{\mathcal{H}^{n-1}(x)} d t.
\end{equation}
Here $f$ is assumed to be Lipschitz, and 
$|Jf(x)| :=\sqrt{\operatorname{det} df(x) df(x)^t}$ denotes the
modulus of the Jacobian.  We also remind the reader about some well known facts 
regarding the $\Gamma$ function (cf. \cite{Lu}, for instance): i) It admits the asymptotic expansion
\begin{equation}\label{asym}
\Gamma (z) = e^{-z} z^{z - 1/2} \sqrt{2\pi} \left(1 + \frac{1}{12 z} 
+ O(z^{-2})\right).
\end{equation}
ii)
$\Gamma(1/2) =\pi^{1/2}$. iii) $\Gamma^\prime (1) = -\gamma$
(Euler's constant). iv) The logarithmic derivative of the
$\Gamma$ function is an analytic function on 
$\mathbb{C}\setminus \{0, -1, -2, \dots\}$; in particular, it is
continuous there.

The following notation, introduced in the preceding section, is recalled
here: $\alpha_{i,n} > 0
$,
$\sum_{i=1}^n\alpha_{i,n} = 1$, and
$a_{i,n} = n \alpha_{i,n}$.
Without loss of generality we assume that the weights are
arranged in decreasing order: $a_{1,n} \ge a_{2,n} \ge \dots \ge a_{n,n}$.
The associated unit sphere and the uniform probability on it are
denoted by $\mathbb{S}_{a_n}^{n-1}$ and $ P_{a_{n}}^{n-1}$ respectively,
and the norm, by $\|\cdot\|_{a_n}$.

\begin{lemma}\label{lemma1} Let $s\in\mathbb{R}$ satisfy $1 + s a_{1,n} >0$. Then the expectation  of $\prod_{i=1}^n |x_i|^{a_{i,n} s}$ over $\mathbb{S}_{a_n}^{n-1}$ is given by
\begin{equation}\label{mean1}
E \left( \prod_{i=1}^n |x_i|^{a_{i,n} s}\right) 
:= 
\int_{\mathbb{S}_{a_n}^{n-1}} \left(  \prod_{i=1}^n |x_i|^{a_{i,n} s}\right) d P_{a_{n}}^{n-1}(x)
= 
\frac{\Gamma\left(n \right)}{\Gamma\left((1 + s) n\right)} \left(  \prod_{i=1}^n \frac{\Gamma\left( 1 + a_{i,n} s \right)}{a_{i,n}^{a_{i,n} s}}\right).
\end{equation}
\end{lemma}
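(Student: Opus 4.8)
The plan is to reduce the surface integral over $\mathbb{S}_{a_n}^{n-1}$ to a product of one-dimensional Gamma integrals by introducing an exponential weight and applying the coarea formula \eqref{coarea} to $f(x) := \|x\|_{a_n} = \sum_{i=1}^n a_{i,n}|x_i|$. This map is Lipschitz, and off the coordinate hyperplanes (a null set) it is differentiable with $|Jf(x)| = \left(\sum_{i=1}^n a_{i,n}^2\right)^{1/2} =: A$. Applying \eqref{coarea} to the function $g(x)e^{-f(x)}$, where $g(x):=\prod_{i=1}^n|x_i|^{a_{i,n}s}$, and using that $e^{-f}\equiv e^{-t}$ on the fiber $f^{-1}(t)$, one obtains
\[
A\int_{\mathbb{R}^n} g(x)\,e^{-f(x)}\,dx \;=\; \int_0^\infty e^{-t}\left(\int_{f^{-1}(t)} g\, d\mathcal{H}^{n-1}\right)dt .
\]
The hypothesis $1+sa_{1,n}>0$, together with $a_{1,n}\ge a_{i,n}>0$ and $a_{1,n}\ge 1$ (the largest weight is at least the average $1$), guarantees $1+a_{i,n}s>0$ for every $i$ and $s>-1$, so every integral appearing below is finite.

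For the right-hand side I would use homogeneity. The level set $f^{-1}(t)$ is the dilate $t\,\mathbb{S}_{a_n}^{n-1}$; since $g$ is positively homogeneous of degree $s\sum_i a_{i,n}=ns$ and $\mathcal{H}^{n-1}$ transforms by the factor $t^{n-1}$ under $x\mapsto tx$, the change of variables $x=ty$ gives $\int_{f^{-1}(t)} g\, d\mathcal{H}^{n-1}= t^{ns+n-1}\int_{\mathbb{S}_{a_n}^{n-1}} g\, d\mathcal{H}^{n-1}$. Multiplying by $e^{-t}$ and integrating in $t$ contributes the factor $\int_0^\infty t^{n(1+s)-1}e^{-t}\,dt=\Gamma(n(1+s))$, so the right-hand side equals $\Gamma(n(1+s))\int_{\mathbb{S}_{a_n}^{n-1}} g\, d\mathcal{H}^{n-1}$.

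For the left-hand side the integral over $\mathbb{R}^n$ factors into a product of one-dimensional integrals, $\int_{\mathbb{R}^n} g\,e^{-f}\,dx=\prod_{i=1}^n\int_{-\infty}^{\infty}|x_i|^{a_{i,n}s}e^{-a_{i,n}|x_i|}\,dx_i$, and the substitution $u=a_{i,n}|x_i|$ evaluates each factor as $2\,a_{i,n}^{-1-a_{i,n}s}\,\Gamma(1+a_{i,n}s)$. Equating both sides yields
\[
\int_{\mathbb{S}_{a_n}^{n-1}} g\, d\mathcal{H}^{n-1}
=\frac{A\,2^n}{\Gamma(n(1+s))}\left(\prod_{i=1}^n a_{i,n}^{-1}\right)\prod_{i=1}^n\frac{\Gamma(1+a_{i,n}s)}{a_{i,n}^{a_{i,n}s}} .
\]
Taking $s=0$ (equivalently $g\equiv1$) gives the normalizing constant $\mathcal{H}^{n-1}(\mathbb{S}_{a_n}^{n-1}) = A\,2^n\,\Gamma(n)^{-1}\prod_{i=1}^n a_{i,n}^{-1}$, and dividing the two displays cancels $A$, $2^n$ and $\prod_i a_{i,n}^{-1}$, producing exactly \eqref{mean1}.

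The convergence bookkeeping and the change of variables are routine; the only genuinely delicate step is the scaling identity for $\int_{f^{-1}(t)} g\, d\mathcal{H}^{n-1}$, which rests on the behaviour of Hausdorff measure under dilations and on the fixed normalization of $\mathcal{H}^{n-1}$ chosen earlier in the paper. It is essential only that the \emph{same} normalization be used in the numerator and the denominator, which is why it is harmless that the overall constant $A\,2^n\prod_i a_{i,n}^{-1}$ is never needed explicitly.
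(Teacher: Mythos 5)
Your proposal is correct and follows essentially the same route as the paper: apply the coarea formula to $\prod_{i=1}^n|x_i|^{a_{i,n}s}\,e^{-\|x\|_{a_n}}$ with $|Jf|=\|a_n\|_2$, use homogeneity to rescale the level sets $\{\|x\|_{a_n}=t\}$ to the unit sphere (picking up $t^{ns+n-1}$ and hence $\Gamma\left((1+s)n\right)$), and factor the full-space integral into one-dimensional Gamma integrals. The only (harmless) difference is that you obtain the normalizing constant $\mathcal{H}^{n-1}(\mathbb{S}_{a_n}^{n-1})$ by setting $s=0$ in the same identity, whereas the paper computes it separately from $|\mathbb{B}_{a_n}^{n}|=2^n/(n!\prod_i a_{i,n})$ via a first application of the coarea formula.
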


By  renormalization of the weights, $a_{1,n} \ge 1$. 
The restriction $s  > - 1 / a_{1,n}$ is of no consequence to us
since we will be using this result for $s < 0$ (to obtain lower bounds) 
when $\alpha_{1,n} \le M$ and $s\approx 0$
(but $s >>1/n$).

\begin{proof} It is well known and easy to check that
the volume of the  $(\mathbb{R}^n, \|\cdot\|_1)$-unit ball is
$|\mathbb{B}_1^{n}| = 2^{n}/n!$. Let $T_n$ be the linear
transformation satisfying $T_n (\mathbb{S}_{1}^{n-1}) =
\mathbb{S}_{a_n}^{n-1} = \{\sum_{i=1}^n a_{i,n} |x_i| = 1\}$.
Then $\det T_n = \prod_{i=1}^n  a_{i,n}^{-1}$.
Now for every
$x\in \mathbb{R}^{n}\setminus \cup_{i = 1}^n \{x_i\ne 0\}$,
the function
$f(x)= \|x\|_{a_n}$ is differentiable, and $df(x) df(x)^t$ is
a $1\times1$ matrix, so  
$$
|Jf(x)|=\sqrt{\operatorname{det} (df(x) df(x)^t)}= \sqrt{\sum_{i=1}^n a_{i,n}^2} = \|a_n\|_2
$$ a.e. on
$\mathbb{R}^{n}$.  
Set
$g(x) = 1/|Jf(x)|$ in (\ref{coarea}), and denote
by $\mathbb{S}^{n-1}_{a_n}(\rho)$ the sphere centered at 0 of
radius $\rho$ (when $\rho = 1$ we usually omit it). Then 
\begin{equation*}
\frac{ 2^{n}}{n! \prod_{i=1}^n  a_{i,n}} 
= 
|T_n (\mathbb{B}^{n}_{1})| 
= 
|\mathbb{B}^{n}_{a_n}|
= \int_{\mathbb{B}^{n}_{a_n}}   d x
=
\int_{0}^{1} \int_{\mathbb{S}^{n-1}_{a_n}(\rho)} \frac{1}{\|a_n\|_2} d \mathcal{H}^{n-1}(x) d \rho 
\end{equation*}
\begin{equation*}
=
\frac{|\mathbb{S}^{n-1}_{a_n}|}{\|a_n\|_2}\int_{0}^{1} \rho^{n-1} d \rho
 =   
 \frac{|\mathbb{S}_{a_n}^{n-1}|}{ n \|a_n\|_2},
\end{equation*}
so $|\mathbb{S}_{a_n}^{n-1}| = 2^n \|a_n\|_2/(\Gamma(n) \prod_{i=1}^n  a_{i,n})$.

Next, set
$g(x) = \prod_{i=1}^n |x_i|^{a_{i,n} s} \exp\left( - \sum_{i=1}^n a_{i,n} |x_i| \right) /|Jf(x)|$ in (\ref{coarea}). Then
\begin{equation*}
\int_{\mathbb{R}^n} \left( \prod_{i=1}^n |x_i|^{a_{i,n} s}\right) 
\exp\left( - \sum_{i=1}^n a_{i,n} |x_i| \right) dx
\end{equation*}
\begin{equation*}
=
 \frac{1}{\|a_n\|_2} \int_0^\infty e^{- t} \int_{\{\|x\|_{a_n} = t\}} \left( \prod_{i=1}^n |x_i|^{a_{i,n} s}\right)
d\mathcal{H}^{n-1}(x) dt
\end{equation*}
\begin{equation*}
=  \frac{1}{\|a_n\|_2} \int_0^\infty  t^{n s}e^{- t} \int_{\{\|x\|_{a_n} = t\}} \left( \prod_{i=1}^n \left|\frac{x_i}{t}\right|^{a_{i,n} s}\right)
d\mathcal{H}^{n-1}(x) dt
\end{equation*}
\begin{equation*}
 =  \frac{1}{\|a_n\|_2} \int_0^\infty  t^{n s}e^{- t} \int_{\{\|x\|_{a_n} = 1\}} \left( \prod_{i=1}^n \left|x_i\right|^{a_{i,n} s}\right)
t^{n-1} d\mathcal{H}^{n-1}(x) dt
\end{equation*}
\begin{equation*}
 =  \frac{1}{\|a_n\|_2} \int_{\mathbb{S}_{a_n}^{n-1}} \left( \prod_{i=1}^n \left|x_i\right|^{a_{i,n} s}\right) d P_1^{n-1}(x) \frac{2^n \|a_n\|_2}{\Gamma(n) \prod_{i=1}^n  a_{i,n}} \int_0^\infty  t^{(1 + s) n -1 }e^{- t} dt.
 \end{equation*}
Since
\begin{equation*}
\int_{\mathbb{R}^n} \left( \prod_{i=1}^n |x_i|^{a_{i,n} s}\right) 
\exp\left( - \sum_{i=1}^n a_{i,n} |x_i| \right) dx
\end{equation*}
\begin{equation*}
= 
2^n 
\prod_{i=1}^n \left(\int_0^\infty  t^{{a_{i,n} } s}e^{- {a_{i,n}} t} dt\right)
=
 2^n \prod_{i=1}^n \frac{\Gamma(1 + a_{i,n} s)}{a_{i,n}^{a_{i,n} s -1}}
\end{equation*}
and 
\begin{equation*}
\int_0^\infty  t^{(1 + s) n -1 }e^{- t} dt = \Gamma\left((1 + s) n\right),
\end{equation*}
(\ref{mean1}) follows.
\end{proof}

\begin{lemma}\label{lemmagamma} Fix $M \ge 1$ and $\varepsilon \in (0, 1)$. Then there exists a $\delta > 0$  such that for all $s\in(-\delta, \delta)\setminus \{0\}$
and every $t\in (0, M]$, 
we have
\begin{equation}\label{egamma}
(1 - \varepsilon)   e^{-t\gamma} < \Gamma(1+ st)^{1/s} < 
(1 + \varepsilon)  e^{-t\gamma}.
\end{equation}
\end{lemma}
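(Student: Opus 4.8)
The plan is to pass to logarithms and reduce the assertion to a uniform statement about the digamma function $\psi := (\log\Gamma)' = \Gamma'/\Gamma$. Writing $\Gamma(1+st)^{1/s} = \exp\!\bigl(s^{-1}\log\Gamma(1+st)\bigr)$, it is enough to produce, given $\varepsilon$ and $M$, a $\delta>0$ so small that
\begin{equation*}
\bigl|s^{-1}\log\Gamma(1+st) + t\gamma\bigr| < \log(1+\varepsilon)\qquad\text{for all } 0<|s|<\delta,\ t\in(0,M].
\end{equation*}
Indeed, once this holds we may write $\Gamma(1+st)^{1/s} = e^{-t\gamma}e^{\theta}$ with $|\theta|<\log(1+\varepsilon)$, so $e^{\theta}$ lies between $(1+\varepsilon)^{-1}$ and $1+\varepsilon$; since $(1+\varepsilon)(1-\varepsilon)=1-\varepsilon^2<1$ gives $(1+\varepsilon)^{-1}>1-\varepsilon$, inequality (\ref{egamma}) follows at once.

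For the uniform estimate I would use the representation $\log\Gamma(1+v) = \int_0^v \psi(1+w)\,dw$, valid whenever $1+w>0$ on the segment of integration, which turns $s^{-1}\log\Gamma(1+st)$ into $t\cdot A(st)$, where $A(v):=v^{-1}\int_0^v\psi(1+w)\,dw$ is the mean value of $\psi(1+\cdot)$ over the segment joining $0$ to $v$. By facts iii) and iv) recalled above, $\psi$ is continuous at $1$ with $\psi(1)=\Gamma'(1)/\Gamma(1)=-\gamma$. Hence, for any prescribed $\eta>0$ there is a $\delta_0\in(0,1)$ with $|\psi(1+w)+\gamma|<\eta$ whenever $|w|<\delta_0$; averaging over the segment from $0$ to $v$ gives $|A(v)+\gamma|<\eta$ for every $v$ with $0<|v|<\delta_0$. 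Now fix $\eta$ at the outset so that $M\eta<\log(1+\varepsilon)$, take the corresponding $\delta_0\in(0,1)$, and set $\delta:=\delta_0/M$. For $0<|s|<\delta$ and $t\in(0,M]$ we have $|st|<\delta_0<1$, so $1+st>0$ (the integral representation of $A(st)$ applies, as $1+w\ge 1-|st|>0$ on the segment), $\Gamma(1+st)$ is well defined, and $\bigl|s^{-1}\log\Gamma(1+st)+t\gamma\bigr| = t\,|A(st)+\gamma|\le M\eta<\log(1+\varepsilon)$, which is exactly the bound needed.

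The only place calling for a little care is the sign of $s$: when $s<0$ the point $st$ is negative and $A(st)$ is an average of $\psi(1+\cdot)$ over an interval lying to the left of $0$, but the continuity argument is symmetric in $v$, so the same $\delta_0$ works, and the choice $\delta<1/M$ keeps $1+st$ bounded away from the poles $0,-1,-2,\dots$ of $\Gamma$. I do not expect a genuine obstacle; the substance is merely the differentiability of $\log\Gamma$ at $1$, promoted to a statement uniform in $t\in(0,M]$ by factoring out the $t$ and bounding it by $M$. (One could instead quote the asymptotic expansion (\ref{asym}), but that is an expansion for large argument and is not directly suited to the behavior near $1$; the integral form of the remainder is the cleaner route.)
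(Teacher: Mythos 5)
Your proof is correct and takes essentially the same route as the paper: both arguments rest on the continuity of $\Gamma'/\Gamma$ at $1$ together with $\Gamma'(1)=-\gamma$, and both obtain uniformity in $t\in(0,M]$ by shrinking $\delta$ by a factor of $M$ while tightening the tolerance accordingly. The only difference is bookkeeping: you work additively with $\log\Gamma$ via the averaged digamma, whereas the paper works multiplicatively, computing $\lim_{s\to 0}\Gamma(1+s)^{1/s}=e^{-\gamma}$ by L'H\^opital and then substituting $s\mapsto st$ with the sharpened constants $(1\pm\varepsilon)^{1/M}$.
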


\begin{proof} For $s\ne 0$ sufficiently close to $0$, 
the term $\Gamma\left(1 + s\right)^{1/s}$ can be estimated using L'H\^opital's rule
and the continuity of $\Gamma^\prime (x)/\Gamma (x)$ at 1:
\begin{equation}\label{factor2}
\lim_{s\to 0} \Gamma\left(1 + s\right)^{1/s}
= \exp\left(\frac{\Gamma^\prime (1)}{\Gamma (1)}\right) = e^{-\gamma}.
\end{equation}
Thus, we can choose $\delta >0$ such that for all $s \in (-\delta M, \delta M)\setminus \{0\}$,
\begin{equation}\label{egamma1}
(1 - \varepsilon)^{1/M} e^{-\gamma} <  \Gamma(1+ s)^{1/s}  < 
(1 + \varepsilon)^{1/M} e^{-\gamma}.
\end{equation}
Next, let $s \in (-\delta, \delta)\setminus \{0\}$ and pick $t\in (0, M]$.
Using the change of variables $s\mapsto st$ in (\ref{egamma1}), we get
(\ref{egamma}).
\end{proof}

\begin{lemma}\label{lemmaprod} Suppose there exists an $M \ge  1$ such that
for all $n\in \mathbb{N}$, $n \ge 2$, and all $i=1, \dots, n$, 
$a_{1,n}\le M$. Then for every $\varepsilon > 0$, there exists a $\delta > 0$  such that for every $s\in(-\delta, \delta)\setminus \{0\}$ and
every $n\in \mathbb{N}$ with $n \ge 2$ 
we have
\begin{equation}\label{prod}
(1 - \varepsilon)   e^{-\gamma} < 
\left(  \prod_{i=1}^n \Gamma\left( 1 + a_{i,n} s \right)\right)^{1/sn} < 
(1 + \varepsilon)  e^{-\gamma}.
\end{equation}
\end{lemma}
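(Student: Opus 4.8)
The plan is to reduce everything to a termwise application of Lemma~\ref{lemmagamma}. First I would observe that it suffices to treat $\varepsilon\in(0,1)$: if $\varepsilon\ge 1$, any $\delta$ that works for $\varepsilon'=1/2$ works a fortiori. So fix $\varepsilon\in(0,1)$ and let $\delta>0$ be the constant produced by Lemma~\ref{lemmagamma} for this $M$ and this $\varepsilon$ (shrinking it if necessary so that $\delta M<1$, which guarantees $\Gamma(1+a_{i,n}s)>0$ throughout). The decisive point is that the estimate $(\ref{egamma})$ holds \emph{uniformly} for $t\in(0,M]$, so this $\delta$ does not depend on $n$ and will do for all $n\ge 2$ at once.

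Next, fix $n\ge 2$ and $s\in(-\delta,\delta)\setminus\{0\}$. Since $0<a_{i,n}\le a_{1,n}\le M$ for every $i$, I apply Lemma~\ref{lemmagamma} with $t=a_{i,n}$ to obtain, for each $i=1,\dots,n$,
\[
(1-\varepsilon)\,e^{-a_{i,n}\gamma}\;<\;\Gamma(1+a_{i,n}s)^{1/s}\;<\;(1+\varepsilon)\,e^{-a_{i,n}\gamma}.
\]
All three quantities are strictly positive, so I may multiply these $n$ inequalities. Using $\sum_{i=1}^n a_{i,n}=n$ to collapse the exponents, $\prod_{i=1}^n e^{-a_{i,n}\gamma}=e^{-\gamma n}$, and using $\prod_{i=1}^n y_i^{1/s}=\bigl(\prod_{i=1}^n y_i\bigr)^{1/s}$ for positive $y_i$, this gives
\[
(1-\varepsilon)^n e^{-\gamma n}\;<\;\Bigl(\prod_{i=1}^n\Gamma(1+a_{i,n}s)\Bigr)^{1/s}\;<\;(1+\varepsilon)^n e^{-\gamma n}.
\]

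Finally I would take $n$-th roots. Because $x\mapsto x^{1/n}$ is strictly increasing on $(0,\infty)$, the order is preserved, and the middle term becomes $\bigl(\prod_{i=1}^n\Gamma(1+a_{i,n}s)\bigr)^{1/(sn)}$, while the two outer terms become $(1-\varepsilon)e^{-\gamma}$ and $(1+\varepsilon)e^{-\gamma}$; this is exactly $(\ref{prod})$. There is essentially no obstacle here: the only points worth a moment's care are that one should strip off the power $1/s$ once, inside Lemma~\ref{lemmagamma}, rather than attempting to raise a product of inequalities to the (possibly negative) exponent $1/s$ directly; and that the telescoping $\prod_i e^{-a_{i,n}\gamma}=e^{-\gamma n}$ — hence the fact that the concentration level $e^{-\gamma}$ is the same for every uniformly bounded weight sequence — is forced precisely by the normalization $\sum_{i=1}^n a_{i,n}=n$.
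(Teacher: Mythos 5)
Your proposal is correct and follows essentially the same route as the paper: apply Lemma \ref{lemmagamma} with $t=a_{i,n}$ for each $i$, multiply the resulting positive inequalities, use the normalization $\sum_{i=1}^n a_{i,n}=n$, and take $n$-th roots. The extra care you take (reducing to $\varepsilon\in(0,1)$ and noting uniformity of $\delta$ in $n$) only makes explicit what the paper leaves implicit.
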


\begin{proof} Fix $n$. Set $t = a_{i,n}$ in (\ref{egamma}), to get, for 
each $i=1,\dots, n$, 
\begin{equation*}
(1 - \varepsilon)  e^{-a_{i,n}\gamma} <  \Gamma(1+ s a_{i,n})^{1/s}  < 
(1 + \varepsilon) e^{-a_{i,n} \gamma}.
\end{equation*}
Taking the product over $i$ and using $\sum_{i=1}^n a_{i,n} = n$ we obtain 
(\ref{prod}).
\end{proof}

\begin{lemma}\label{lemmalagran} For $i=1, \dots, n$ let $t_i > 0$.
Subject to the restriction $\sum_{i=1}^n t_{i} \ge  n$, the function
$f(t_1,\dots, t_n):= \prod_{i=1}^n t_{i}^{t_i}$ achieves its global
minimum when $t_1 = t_2 = \dots = t_n = 1$, where it takes the value $1$.
\end{lemma}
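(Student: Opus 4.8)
The plan is to pass to logarithms and exploit the convexity of $\phi(t):=t\log t$ on $(0,\infty)$. Since $\log$ is strictly increasing and $f(t_1,\dots,t_n)=\prod_{i=1}^n t_i^{t_i}>0$ on the feasible region $R:=\{(t_1,\dots,t_n): t_i>0,\ \sum_{i=1}^n t_i\ge n\}$, minimizing $f$ over $R$ is equivalent to minimizing
\[
g(t_1,\dots,t_n):=\log f(t_1,\dots,t_n)=\sum_{i=1}^n t_i\log t_i=\sum_{i=1}^n \phi(t_i).
\]

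First I would record the elementary one-variable inequality $\phi(t)=t\log t\ge t-1$ for all $t>0$, with equality if and only if $t=1$. This is immediate from convexity: $\phi''(t)=1/t>0$, so the graph of $\phi$ lies above its tangent line at $t=1$, which is precisely the line $y=t-1$ (since $\phi(1)=0$ and $\phi'(1)=1$), and strict convexity pins down the equality case.

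Summing this inequality over $i=1,\dots,n$ and invoking the constraint $\sum_{i=1}^n t_i\ge n$ gives
\[
g(t_1,\dots,t_n)=\sum_{i=1}^n \phi(t_i)\ge\sum_{i=1}^n (t_i-1)=\Big(\sum_{i=1}^n t_i\Big)-n\ge 0,
\]
and the value $0$ is attained at $t_1=\dots=t_n=1$, which belongs to $R$. Furthermore, equality in the first step forces $t_i=1$ for every $i$, so $(1,\dots,1)$ is the only minimizer. Exponentiating, $f(t_1,\dots,t_n)\ge 1=f(1,\dots,1)$, with the global minimum attained exactly at the all-ones point, which is the assertion of the lemma.

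I do not anticipate a serious obstacle. The only point needing a moment's care is that the feasibility condition is the inequality $\sum t_i\ge n$ rather than the equality $\sum t_i=n$, but the tangent-line bound $\phi(t)\ge t-1$ handles this automatically, since it shows $g$ never drops below $0$ anywhere in $R$ and equals $0$ only at $(1,\dots,1)$. (Equivalently, one could run a Lagrange-multiplier computation on the boundary hyperplane $\sum t_i=n$: stationarity $\log t_i+1=\lambda$ forces all $t_i$ equal, hence $t_i=1$, and convexity of $g$ then excludes the interior and certifies this as the minimum; the tangent-line argument simply packages all of this at once.)
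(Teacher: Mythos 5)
Your proof is correct, and it takes a genuinely different (and cleaner) route than the paper. The paper's own argument is only sketched: it extends $f$ continuously to the closed region $\{t_i\ge 0\}$ by setting $t_i^{t_i}=1$ at $t_i=0$, applies Lagrange multipliers in the interior, and handles the faces where some coordinates vanish by induction on $n$; implicitly it also needs an existence argument for the minimizer (coercivity of $f$ on the unbounded feasible set) and a separate check that the interior critical point $(1,\dots,1)$ beats the boundary strata. Your tangent-line argument sidesteps all of this: the single inequality $t\log t\ge t-1$ for $t>0$, with equality only at $t=1$, summed over $i$ and combined with $\sum_i t_i\ge n$, gives $\log f\ge 0$ pointwise on the entire feasible region, so no compactness, no boundary cases, and no multiplier computation are needed, and you get uniqueness of the minimizer for free. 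It also handles the inequality constraint $\sum_i t_i\ge n$ directly, whereas the Lagrange-multiplier route naturally addresses only the level set $\sum_i t_i=n$ and needs an extra monotonicity remark to cover the rest. In short, your proof is a complete and more elementary substitute for the paper's sketch, proving exactly the statement the paper uses (in the proof of Theorem \ref{looseconc}, where only the lower bound $\prod_i a_{i,n}^{a_{i,n}}\ge 1$ under $\sum_i a_{i,n}=n$ is invoked).
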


\begin{proof} Setting $t_i^{t_i} =1$ (by continuity) when $t_i=0$,
the   function
$f(t_1,\dots, t_n):= \prod_{i=1}^n t_{i}^{t_i}$ extends to the set where
$t_i \ge 0$ for  $i=1, \dots, n$. Using Lagrange multipliers
when all $t_i > 0$ and induction when one or several of the $t_i$ equal 0,
the result follows.
\end{proof}

{\em Proof of Theorem \ref{looseconc}.} Fix $k > 0$ and 
$\varepsilon\in (0,1)$ 
($k$ denotes a large parameter and $\varepsilon$ a small one). 
To obtain an upper bound for 
$r_{1,n} (x) = n \prod_{i=1}^n |x_i|^{\alpha_{i,n}}$ on $\mathbb{S}^{n-1}_{a_n}$,
we use Chebyshev's inequality
\begin{equation}\label{cheby1}
P_{a_n}^{n-1}\left\{\prod_{i=1}^n |x_i|^{a_{i,n} s} \ge t\right\} \le 
\frac{1}{t} E\left(\prod_{i=1}^n |x_i|^{a_{i,n} s}\right)
\end{equation} 
with $s = s(k, \varepsilon) > 0$ selected very close to 0, but
fixed (so eventually $s >> n^{-1}$). 
Set
\begin{equation}\label{tchosen}
t = 2 n^k E\left(\prod_{i=1}^n |x_i|^{a_{i,n} s}\right).
\end{equation}
It  follows from (\ref{cheby1})  that 
\begin{equation}\label{concentr1}
P_{a_n}^{n-1}\left\{\prod_{i=1}^n |x_i|^{\alpha_{i,n}} <  t^{1/sn} \right\} \ge 
1 - \frac{1}{2n^k},
\end{equation}
so it suffices to prove 
\begin{equation}\label{t1}
n t^{1/s_0 n} <  (1 + \varepsilon) e^{-\gamma}
\end{equation} 
 for some  small $s_0$ (to be chosen below) and all $n$ large enough; 
in particular, we always assume that $s <<1$, and that $n>> s^{-1}$ whenever both $n$ and
 $s$ appear in the same expression.
Pick $\delta = \delta(\varepsilon) > 0$ such that
$(1 + \delta)^3 < 1 + \varepsilon$. 
By the choice of $t$ (in (\ref{tchosen})) and Lemma \ref{lemma1}, we have
\begin{equation}\label{level1} 
t^{1/sn} = (2 n^k)^{1/sn}   
\left(\frac{\Gamma\left(n \right)}{\Gamma\left((1 + s) n\right)}\right)^{1/sn}
 \left(  \prod_{i=1}^n \frac{\Gamma\left( 1 + a_{i,n} s \right)}{a_{i,n}^{a_{i,n} s}}\right)^{1/sn}. 
\end{equation}
Next we bound each of the three factors in the right hand side of 
(\ref{level1}). From Lemmas \ref{lemmaprod} and \ref{lemmalagran}
we get, for all $s > 0$ sufficiently small, 
\begin{equation}\label{factor3}
\left(  \prod_{i=1}^n \frac{\Gamma\left( 1 + a_{i,n} s \right)}{a_{i,n}^{a_{i,n} s}}\right)^{1/sn}
\le (1 + \delta) e^{-\gamma}. 
\end{equation}
 From the asymptotic expansion (\ref{asym}) of $\Gamma$
 and the assumption $n>> s^{-1}$ 
we obtain 
\begin{equation}\label{factor1}
\left(\frac{\Gamma\left(n \right)}{\Gamma\left((1 + s) n\right)}\right)^{1/sn} 
\le \left(\frac{e}{n}\right) \left(\frac{(1 + s)^{1/2s n}}
{(1 + s)^{1 + 1/s}}\right) \left(1 + O\left(\frac1n\right)\right)^{1/2s n}
\le \left(\frac{1}{n}\right) \left(\frac{e}
{(1 + s)^{1/s}}\right). 
\end{equation}
Finally, by L'H\^opital's rule, given any $s>0$ we have 
\begin{equation}\label{factor3}
\lim_{n\to\infty} (2 n^k)^{1/sn} = 1.
\end{equation}
Thus, we can select $s_0 = s_0(\varepsilon) >0$ so small in (\ref{factor1})
  that $e/(1 + s_0)^{1/s_0} < 1 + \delta$.
Choosing $N = N(k, s_0(\varepsilon))$ such that for all
$n\ge N$ we have $(2 n^k)^{1/s_0 n} < 1+\delta$, inequality (\ref{t1}) follows.

Observe that the hypothesis $a_{i,n} \le M$ on the renormalized weights entails that their geometric
mean $\prod_{i=1}^n a_{i,n}^{\alpha_{i,n}}$ is also bounded above by $M$.
To obtain
\begin{equation*}
P_1^{n-1}\left\{(1 - \varepsilon)\frac{ e^{-\gamma}}{M} < n \prod_{i=1}^n |x_i|^{\alpha_{i,n}} \right\} \ge 
1 - \frac{1}{2n^k},
\end{equation*}
basically all we need to do is to follow the same steps as before, 
but using $s < 0$ instead of $s > 0$, and the preceding observation
instead of Lemma \ref{lemmalagran}. So we  avoid the repetition.
\qed 

\vskip .2 cm

The preceding proof works by respectively giving upper and lower bounds
for $n t^{1/s n}$ when $s > 0$ and $s < 0$ are close enough to zero. Since
\begin{equation*}\label{level1} 
n t^{1/sn} = (2 n^k)^{1/sn}   
\left[n \left(\frac{\Gamma\left(n \right)}{\Gamma\left((1 + s) n\right)}\right)^{1/sn}\right]
 \left(  \prod_{i=1}^n \frac{\Gamma\left( 1 + a_{i,n} s \right)}{a_{i,n}^{a_{i,n} s}}\right)^{1/sn} 
\end{equation*}
and the first two factors on the right hand side approach $1$ as $n\to\infty$,   concentration is controlled by the third factor.
Furthermore, by Lemma \ref{lemmaprod}, for all $s$
sufficiently close to zero we have  
\begin{equation}
(1 - \varepsilon)   e^{-\gamma} < 
\left(  \prod_{i=1}^n \Gamma\left( 1 + a_{i,n} s \right)\right)^{1/sn} < 
(1 + \varepsilon)  e^{-\gamma}, 
\end{equation}
so in order to determine how 
$$
\left(  \prod_{i=1}^n \frac{\Gamma\left( 1 + a_{i,n} s \right)}{a_{i,n}^{a_{i,n} s}}\right)^{1/sn}
$$ 
behaves it is enough to estimate its denominator. The 
concentration results in Theorems \ref{concarb} and \ref{conc0} are obtained by giving  sequences of
weights for which the behavior of 
$
 \prod_{i=1}^n a_{i,n}^{a_{i,n}/n}
$ is easily
understood.

\vskip .2 cm

{\em Proof of Theorem \ref{concarb}.} Denote by $j$ the integer part
of $n/(M + 1)$. Suppose first that $n/(M + 1)$ is an integer.
Then the sequence of weights $a_{i,n} = M$ for $i\le j$ and
$a_{i,n} = 1/M$ for $i > j$ satisfies $\sum_{i=0}^n a_{i,n}=n$. If
$n/(M + 1)$ is not an integer, then $j M + (n-j)/M < n$ while
$(j + 1) M + (n-j -1)/M > n$. Thus, there exists a $t \in[1/M,M]$
such that redefining $a_{j + 1, n} = t$ (instead of $1/M$) we  
have $\sum_{i=0}^n a_{i,n}=n$.  Since for this family of weights 
$\lim_{n\to\infty} \prod_{i=1}^n a_{i,n}^{a_{i,n}/n} = M^{\frac{M-1}{M+1}}$,
the result follows by using the same argument as in the proof of 
Theorem \ref{looseconc}. \qed

\vskip .2 cm

Theorem \ref{conc0} is proven in the same way as the previous one, and
Theorem \ref{concs2}, as Theorem \ref{looseconc}, save for the fact that
we use the uniform probability $P^{n-1}_2$ on the euclidean unit sphere, and the 
corresponding 
 expectation  of $\prod_{i=1}^n |y_i|^{s}$ over $\mathbb{S}_2^{n-1}$.
 This expectation is
computed within the proof of Proposition 1 in \cite{GluMi}:
\begin{equation}\label{expectation2}
E \left( \prod_{i=1}^n |y_i|^{s}\right) := 
\int_{\mathbb{S}_2^{n-1}} \left( \prod_{i=1}^n |y_i|^{s}\right) d P^{n-1}_2
= \left(\frac{\Gamma\left(\frac{1 + s}{2}\right)}{\Gamma\left(\frac{1}{2}\right)}\right)^n \frac{\Gamma\left(\frac{n}{2}\right)}{\Gamma\left(\frac{1 + s}{2} n\right)}.
\end{equation}

\end{document}